
\documentclass[12pt]{article}  

\usepackage{graphicx} 
\usepackage{float}    
\usepackage{verbatim} 
\usepackage{amsmath}  
\usepackage{amssymb}  
\usepackage[colorlinks=true,citecolor=blue,linkcolor=blue,urlcolor=blue]{hyperref}
\usepackage{fullpage}
\usepackage{amsthm,mathtools}
\usepackage{enumerate}
\usepackage{paralist}
\usepackage{xspace}
\usepackage{caption}
\usepackage{bbm}
\usepackage{url}
\usepackage{mathrsfs}


\input{commands}

\numberwithin{theorem}{section}

\numberwithin{equation}{section}


\title{\LARGE \bf
Sparsity Preserving Discretization With Error Bounds
}

\author{James Anderson, Nikolai Matni, and Yuxiao Chen 
\thanks{J. Anderson is with the Department of Computing + Mathematical Sciences, Y. Chen is  with the Department of Mechanical and Civil Engineering, both at the California Institute of Technology, Pasadena CA, 91125.  {\tt\small james@caltech.edu},  {\tt\small chenyx@caltech.edu} \newline N. Matni is with the Department of Electrical Engineering and Computer Science, UC Berkeley, CA 94709.     \tt\small{nmatni@berkeley.edu}
        }%
}

\begin{document}

\maketitle
\thispagestyle{empty}
\pagestyle{empty}

\begin{abstract}
Typically when designing distributed controllers it is assumed that the state-space model of the plant consists of sparse matrices. However, in the discrete-time setting, if one begins with a continuous-time model, the discretization process annihilates any sparsity in the model. In this work we propose a discretization procedure that maintains the sparsity of the continuous-time model. We show that this discretization out-performs a simple truncation method in terms of its ability to approximate the ``ground truth'' model. Leveraging results from numerical analysis we are also  able to upper-bound the error between the dense discretization and our method. Furthermore, we show that in a robust control setting we can design a distributed controller on the  approximate (sparse) model that stabilizes the dense ground truth model.
\end{abstract}

\section{Introduction}
In this paper we consider the problem of taking a continuous-time model
\begin{align}\label{eq:sys_cont}
\dot{x}(t) = \hat Ax(t) + \hat B_1w(t) + \hat B_2u(t),
\end{align}
where $x(t)\in \R^n, w(t)\in \R^{n_w}, $ and $u(t)\in \R^{n_u}$ are the state, disturbance, and control vectors at time $t$, and discretizing it to take the form
\begin{align}\label{eq:sys_disc}
x_{k+1} =  Ax_k +  B_1w_k +  B_2u_k,
\end{align}
where we assume $k$ is the integer sequence $k=0,1,2,\hdots$ of sampling points and that a zero-order-hold scheme is used.

Discretization, the process of converting~\eqref{eq:sys_cont} to~\eqref{eq:sys_disc}, is a well studied topic and numerous methods have been proposed, however all methods we have encountered destroy the sparsity patterns in $(\hat{A}, \hat{B}_1, \hat{B}_2)$ when constructing $(A,B_1,B_2)$. This is unfortunate as the sparsity patterns typically encode some sort of graph or network structure in the physical system. In this work, motivated by distributed control synthesis, we seek to construct sparse discretizations of~\eqref{eq:sys_cont} that respect the network structure of the continuous-time model and are close (in norm) to the ``true'' discrete models.

\section{Background}

\subsection{Discretization}

The zero-order-hold sampling method~\cite{ChenFrancis} for discretization maps $(\hat{A}, \hat{B}_1, \hat{B}_2)$ to $(A,B_1,B_2)$ by specifying a sample rate $\tau>0$ and setting
\begin{equation}\label{eq:sample}
A = e^{\hat A\tau}, \quad B_i = \int_{0}^{\tau}e^{\hat A\lambda} \mathrm{d} \lambda \hat B_i, \quad i \in \{1,2\},
\end{equation}
where $e^X$ defines the exponential
\begin{equation*}
e^X = \sum_{k=0}^{\infty}\frac{1}{k!}X^k
\end{equation*}
 of a matrix $X \in \mathbb{C}^{n\times n}$. When $\hat{A}$ is non-singular the expression for $B_i$ reduces to $\hat A^{-1}(e^{\hat A\tau}-I)\hat{B}_i$. Define $B = [B_1, B_2]$, then a simple method for computing $(A,B)$ (which is applicable when  $\hat A$ is non-singular) was derived by Van Loan~\cite{Van78} and proceeds as follows; Define the matrix
\begin{equation*}
\Psi = \left[ \begin{array}{cccc} -\hat A^T & I & 0 & 0 \\ 0 & -\hat A^T & Q & 0 \\ 0 & 0 & \hat A & \hat B \\ 0 & 0 & 0 & 0 \end{array}
\right]
\end{equation*}
where $Q=Q^T$ is an $n \times n$ real matrix and compute the exponential $e^{\Psi \tau}$~\cite{MolV03}. The exponential takes the form
\begin{equation*}
e^{\Psi \tau} = \left[\begin{array}{cccc} F_1(\tau) & G_1(\tau) & \star & \star \\  0 & F_2(\tau) & G_2(\tau) & \star \\ 0 & 0 & F_3(\tau) & G_3(\tau) \\ 0 & 0 & 0 & F_4(\tau)   \end{array}
\right],
\end{equation*}
where $F_3(\tau) = e^{\hat A\tau}$ and $G_3(\tau) = \int_0^{\tau}e^{\hat A(\tau - \lambda)}\mathrm{d} \lambda \hat B $. Elements denoted with by $\star$ have analytic expressions but are not required here, the remaining $F_i$ and $G_i$ functions are structurally similar to the case of $i=3$ given above. The reader is referred to~\cite{Van78} for the full details.

An alternative method to the sample-and-hold approach is to take a bi-linear transformation (often referred to as Tustin's method) \cite{ChenFrancis,JiaW14}. In this case $(\hat A, \hat B)$ are mapped to $(A,B)$ by
\begin{equation*}
A = \left(I - \frac{\tau}{2}\hat A \right)^{-1}\left( I + \frac{\tau}{2}\hat A \right), ~~ B = \frac{\tau}{2}\left( I - \frac{\tau}{2}\hat A  \right)^{-1} \hat B,
\end{equation*}
provided that the necessary inverse exists.

It should be clear that any non-trivial sparsity patterns in the system matrices in~\eqref{eq:sys_cont} will be lost if~\eqref{eq:sys_disc} is obtained from either~\eqref{eq:sample} or the bilinear-transformation method as both involve computing a matrix exponential or taking an inverse.

In this work we will focus on approximate discretization based on the transformation~\eqref{eq:sample}. Formally, we would like to construct a matrix $F$ such that for a sparse matrix $X$,  $e^X \approx F$ with $\|e^X-F\|< \delta$, where $\delta$ is known a prioiri  and $F$ is sparse.

It should be pointed out that computing approximations of the matrix exponential is a mature topic in numerical analysis. In general these methods look to compute the exponential in a computationally efficient and stable manner rather than preserve any sparsity pattern \cite{MolV03}. Rational approximations seek to replace $e^{X}$ with $p(X)/q(X)$ where $p$ and $q$ are matrix-valued polynomial functions \cite{Hig05}. Spectral methods are perhaps intuitively the simplest methods; Let $X = VDV^{-1}$ where $D$ is a diagonal matrix and $V$ is a matrix formed from the eigenvectors of $X$,  then $e^X = Ve^{D}V^{-1}$. For matrices with dependent eigenvectors, other factorizations can be used. Clearly neither of these methods attempts to produce a sparse exponential. A related problem that arises in numerical solutions of linear ordinary differential equations is that of computing the \emph{action} $v \mapsto e^{At}v$ \cite{alhH11}. Krylov subspace methods~\cite{Saa92} attempt to approximate this action but again it is not clear how one could incorporate sparsity constraints and obtain error bounds using such a method.

\subsection{Distributed Control}
In this section we briefly review the System Level Synthesis (SLS) framework~\cite{WanMD19, SLStutorial} for solving distributed control problems. We consider the state-feedback  problem where the plant $\tf P$ models the state dynamics given by~\eqref{eq:sys_disc} augmented with the error signal $\bar z_k = C_1x_k + D_{11}w_k + D_{12}u_k$. Compactly the system is denoted as
\begin{equation*}
\tf{P}(z) = \left[ \begin{array}{c|cc} A & B_1 & B_2   \\ \hline C_1 & D_{11} & D_{12} \\ I & 0 & 0 \end{array} \right], 
\end{equation*}
which defines the map
\begin{equation*}
\left[\begin{array}{c} \tf {\bar z}(z)\\ \tf y(z) \end{array} \right] = \tf P(z)  \left[\begin{array}{c} \tf w(z)\\ \tf u(z) \end{array} \right]. 
\end{equation*}
We seek to design a controller $\tf u(z) = \tf K(z)\tf{y}(z) = \tf K(z)\tf{x}(z)$. For the remainder of the paper we drop the dependence on $z$ from our notation and simply use bold-face symbols to denote signals in the z-domain.

Unlike classical control synthesis methods which seek to design controllers that minimize the norm of the map from $\tf w$ to $\tf {\bar z}$, SLS controllers work with the closed-loop system response which maps $\ttf{\delta}_x$ to $(\tf x, \tf u)$ where $\ttf{\delta}_x = B_1\tf w$. The system response is described by
\begin{equation*}
\left[\begin{array}{c} \tf x \\ \tf u \end{array}\right] = \left[\begin{array}{c} \Phix \\ \Phiu \end{array}\right]  \ttf{\delta_x},
\end{equation*}
where
\begin{subequations}\label{eq:SFpair}
\begin{align}
\Phix  &= (zI-A-B_2\tf K)^{-1}, \\ \Phiu  &= \tf K (zI-A-B_2\tf K)^{-1}.
\end{align}
\end{subequations}
The following theorem parameterizes all achievable closed-loop system responses and provides a realization of an internally stabilizing controller~\cite{WanMD19}.
\begin{theorem}\label{thm:sf}
Consider the LTI system~\eqref{eq:sys_disc}, evolving under a dynamic state-feedback control policy $\tf u = \tf K \tf x$. The following statements are true:
\begin{enumerate}
\item The affine subspace defined by
\begin{equation}\label{eq:affine_cons}
 \begin{bmatrix} zI - A & -B_2 \end{bmatrix}\begin{bmatrix} \Phix \\ \Phiu \end{bmatrix} = I, \quad \Phix, \Phiu \in \frac{1}{z}\RHinf
\end{equation}
parameterizes all system responses from $\ttf{\delta_x}$ to $(\tf x,\tf u)$ as defined in~\eqref{eq:SFpair}, achievable by an internally stabilizing state feedback controller $\tf K$.
\item For any transfer matrices $\SFpair$ satisfying~\eqref{eq:affine_cons}, the controller $\tf K = \Phiu \Phix^{-1}$ is internally stabilizing and achieves the desired system response~\eqref{eq:SFpair}.
\end{enumerate}
\end{theorem}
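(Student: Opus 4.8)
The plan is to exhibit a correspondence between internally stabilizing dynamic state-feedback controllers and points of the affine subspace~\eqref{eq:affine_cons}: the pair~\eqref{eq:SFpair} maps a controller to such a point, and $\tf K = \Phiu\Phix^{-1}$ maps back. For the ``necessity'' half of statement~1, I would start from an internally stabilizing $\tf K$ and note that the pair in~\eqref{eq:SFpair} satisfies $\begin{bmatrix} zI-A & -B_2\end{bmatrix}\begin{bmatrix}\Phix\\\Phiu\end{bmatrix}=(zI-A-B_2\tf K)(zI-A-B_2\tf K)^{-1}=I$ by a one-line computation. Membership $\Phix,\Phiu\in\frac1z\RHinf$ then follows because, in the plant--controller interconnection, $\Phix$ and $\Phiu$ are precisely the closed-loop maps from the state perturbation $\ttf{\delta}_x$ to $\tf x$ and to $\tf u$, hence stable by internal stability, and strictly proper because the plant's $\tf u\mapsto\tf x$ and $\ttf{\delta}_x\mapsto\tf x$ channels are strictly proper while $\tf K$ is proper.

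For the ``sufficiency'' half of statement~1 together with statement~2, take any $\SFpair$ satisfying~\eqref{eq:affine_cons}. Since $z\Phix = I + (z\Phix-I)$ with $z\Phix-I$ strictly proper, $\Phix$ is invertible as a transfer matrix, so $\tf K := \Phiu\Phix^{-1}$ is well defined and proper. Using the affine constraint once, $zI-A-B_2\tf K = \big((zI-A)\Phix - B_2\Phiu\big)\Phix^{-1} = \Phix^{-1}$, whence $(zI-A-B_2\tf K)^{-1}=\Phix$ and $\tf K(zI-A-B_2\tf K)^{-1}=\Phiu$; that is, $\tf K$ realizes exactly the prescribed response~\eqref{eq:SFpair}.

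The remaining and most delicate point is that $\tf K$ is internally stabilizing. Here $\tf K$ and $\Phix^{-1}$ need not be stable, so I would not argue on $\tf K$ directly; instead I would realize the controller through the strictly proper, stable blocks $\Phixtil := z\Phix - I$ and $\Phiutil := z\Phiu$ (both in $\RHinf$ since $\Phix,\Phiu\in\frac1z\RHinf$) by the causal recursion $\ttf{\beta}=\tf x-\Phixtil\,\ttf{\beta}$, $\tf u = \Phiutil\,\ttf{\beta}$, which indeed produces $\tf u = \Phiu\Phix^{-1}\tf x = \tf K\tf x$. Then I would inject test disturbances at the plant input and at the controller input and compute the four resulting closed-loop transfer matrices; invoking the affine constraint repeatedly, each collapses to an affine expression in $\Phix$ and $\Phiu$ with $\RHinf$ coefficients --- for instance the plant-input channel yields $\Phix B_2$ and $\Phiu B_2$, and the controller-input channel yields $\Phix(zI-A)-I = z\Phix-\Phix A-I$ and $\Phiu(zI-A)=z\Phiu-\Phiu A$ --- all of which lie in $\RHinf$. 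Stability of this full set of maps is exactly internal stability.

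I expect the internal-stability step to be the main obstacle: it requires selecting the realization $(\Phixtil,\Phiutil)$ precisely so that the possibly-unstable modes of $\Phix^{-1}$ are never excited, and then verifying that \emph{every} interconnection map --- not merely the nominal $\ttf{\delta}_x\mapsto(\tf x,\tf u)$ response --- lies in $\RHinf$; the other steps are short algebraic identities, and well-posedness needs no separate argument since the plant has no direct feedthrough from $\tf u$ to the measurement $\tf y=\tf x$.
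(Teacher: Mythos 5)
The paper offers no proof of this theorem; it is quoted verbatim from the SLS literature and attributed to~\cite{WanMD19}. Your proposal is correct and reconstructs essentially the argument given in that reference: the same one-line computation for necessity, the same inversion $zI-A-B_2\tf K=\Phix^{-1}$ for achievability, and the same internal-stability argument via the controller realization through the stable blocks $\Phixtil=z\Phix-I$ and $\Phiutil=z\Phiu$ followed by verification that every interconnection map (including the maps into the controller's internal signal $\ttf\beta$, which you should list explicitly alongside the four you compute) reduces via the affine constraint to an $\RHinf$ expression in $\Phix,\Phiu$.
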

The significance of Theorem~\ref{thm:sf} is that~\eqref{eq:affine_cons} provides an \emph{affine characterization} of $\emph{all}$ achievable system responses. In recent work it was shown that if the affine expression in~\eqref{eq:affine_cons} is not satisfied, it is still possible to construct a stabilizing controller based on an \emph{approximate} system response~\cite{MatWA17}.
\begin{theorem}\label{thm:robust}
Let $(\Phixh, \Phiuh, \ttf{\Delta})$ with $\Phixh, \Phixh \in \frac{1}{z}\RHinf$ be a solution to
\begin{equation}
\begin{bmatrix} zI - A & -B_2 \end{bmatrix} \begin{bmatrix} \Phixh \\ \Phiuh \end{bmatrix} = I + \ttf{\Delta}.  \label{eq:near_sf1}
\end{equation}
Then, the controller  $\tf K = \Phiuh \Phixh^{-1}$ internally stabilizes the system $(A,B_2)$ if and only if $(I + \ttf{\Delta})^{-1}$ is stable.  Furthermore, the actual system responses achieved are given by
\begin{equation*}
\begin{bmatrix} \tf x \\ \tf u \end{bmatrix} = \begin{bmatrix} \Phixh \\ \Phiuh \end{bmatrix}(I+ \ttf\Delta)^{-1}\ttf{\delta_x}.
\end{equation*}
\label{thm:robust}
\end{theorem}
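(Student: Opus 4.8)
The plan is to reduce the statement to Theorem~\ref{thm:sf} by a single algebraic observation: the perturbed identity~\eqref{eq:near_sf1} becomes the \emph{exact} identity~\eqref{eq:affine_cons} after postmultiplication by $(I+\ttf{\Delta})^{-1}$. Concretely, multiplying~\eqref{eq:near_sf1} on the right by $(I+\ttf{\Delta})^{-1}$ gives
\[
\begin{bmatrix} zI - A & -B_2 \end{bmatrix}\begin{bmatrix} \Phixh(I+\ttf{\Delta})^{-1} \\ \Phiuh(I+\ttf{\Delta})^{-1} \end{bmatrix} = I ,
\]
so the pair $(\Phix,\Phiu):=\big(\Phixh(I+\ttf{\Delta})^{-1},\,\Phiuh(I+\ttf{\Delta})^{-1}\big)$ satisfies the affine constraint in~\eqref{eq:affine_cons}, and the extra factor cancels in the induced controller: $\Phiu\Phix^{-1}=\Phiuh(I+\ttf{\Delta})^{-1}(I+\ttf{\Delta})\Phixh^{-1}=\Phiuh\Phixh^{-1}=\tf K$. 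Hence $\tf K$ is exactly the controller Theorem~\ref{thm:sf} associates with $(\Phix,\Phiu)$, and the only part of the hypothesis of~\eqref{eq:affine_cons} that is not automatic is the regularity condition $\Phix,\Phiu\in\frac1z\RHinf$; since $\frac1z\RHinf$ is closed under right multiplication by $\RHinf$, this holds whenever $(I+\ttf{\Delta})^{-1}$ is stable.

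For the ``if'' direction, suppose $(I+\ttf{\Delta})^{-1}\in\RHinf$. Then $\Phix,\Phiu\in\frac1z\RHinf$ by the previous paragraph, so $(\Phix,\Phiu)$ is a genuine point of the subspace~\eqref{eq:affine_cons}, and applying part~2 of Theorem~\ref{thm:sf} to this pair shows that $\tf K=\Phiu\Phix^{-1}$ internally stabilizes $(A,B_2)$ and realizes the closed-loop response~\eqref{eq:SFpair}, i.e.
\[
\begin{bmatrix}\tf x\\\tf u\end{bmatrix}=\begin{bmatrix}\Phix\\\Phiu\end{bmatrix}\ttf{\delta_x}=\begin{bmatrix}\Phixh\\\Phiuh\end{bmatrix}(I+\ttf{\Delta})^{-1}\ttf{\delta_x},
\]
which is the ``furthermore'' claim. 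The same response formula can be read off directly from the standard state-space realization of the SLS controller $\tf K$ built from $(\Phixh,\Phiuh)$~\cite{WanMD19}: that realization, interconnected with the plant, produces $\tf x$ and $\tf u$ as $\Phixh$ and $\Phiuh$ acting on a common internal signal $\ttf{\hat\delta}$, so substituting $\tf x=\Phixh\ttf{\hat\delta}$, $\tf u=\Phiuh\ttf{\hat\delta}$ into the plant relation $\begin{bmatrix}zI-A&-B_2\end{bmatrix}\begin{bmatrix}\tf x\\\tf u\end{bmatrix}=\ttf{\delta_x}$ and using~\eqref{eq:near_sf1} yields $(I+\ttf{\Delta})\ttf{\hat\delta}=\ttf{\delta_x}$, hence $\ttf{\hat\delta}=(I+\ttf{\Delta})^{-1}\ttf{\delta_x}$ and the formula once more.

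For the ``only if'' direction, suppose $\tf K$ internally stabilizes $(A,B_2)$. Using the realization of $\tf K$ built from $(\Phixh,\Phiuh)$, the internal signal driving the controller again satisfies $\ttf{\hat\delta}=(I+\ttf{\Delta})^{-1}\ttf{\delta_x}$; since $\Phixh,\Phiuh\in\frac1z\RHinf$ already by hypothesis, the instability of $(I+\ttf{\Delta})^{-1}$ cannot be absorbed into those factors, so internal stability of the interconnection forces $(I+\ttf{\Delta})^{-1}$ itself to lie in $\RHinf$. I expect this converse to be the delicate step: it hinges on pinning down what ``$\tf K$ internally stabilizes $(A,B_2)$'' means for a controller presented only as the transfer matrix $\tf K=\Phiuh\Phixh^{-1}$ (which may be improper or carry hidden modes), and on the fact that the internal modes of the SLS realization are precisely the poles of $(I+\ttf{\Delta})^{-1}$ --- equivalently, the transmission zeros of $\Phixh$ not cancelled by $\Phiuh$. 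This is the same bookkeeping that underlies the proof of part~2 of Theorem~\ref{thm:sf} in~\cite{WanMD19}, so I would invoke it from there rather than redo it, after which the equivalence drops out.
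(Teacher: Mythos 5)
The paper states Theorem~\ref{thm:robust} without proof, importing it verbatim from~\cite{MatWA17}, and your argument is correct and essentially reproduces the proof given there. Both the reduction of the ``if'' direction to Theorem~\ref{thm:sf} by right-multiplying~\eqref{eq:near_sf1} by $(I+\ttf{\Delta})^{-1}$ (noting the factor cancels in $\tf K = \Phiuh\Phixh^{-1}$), and the converse via the SLS realization of $\tf K$, whose internal signal is driven by $(I+\ttf{\Delta})^{-1}\ttf{\delta_x}$ so that internal stability forces $(I+\ttf{\Delta})^{-1}\in\RHinf$, are exactly the steps of the cited proof.
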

Theorem~\ref{thm:robust} forms the basis of what we term a system level synthesis problem, i.e., a mathematical program that returns a distributed optimal controller. The standard SLS problem is 
\begin{align}\label{eq:SLS}
\minimize_{\gamma \in [0,1)}~& \underset{\Phixh, \Phiuh, \ttf \Delta}{\minimize} \quad  g(\Phixh, \Phiuh) \nonumber \\
&\st \quad \quad \Phixh, \Phixh \in \frac{1}{z}\RHinf,    \nonumber  \\
& \quad    \hspace{1.3cm} \quad \|\ttf \Delta\|< \gamma,  \\
& \quad \hspace{1.7cm} \eqref{eq:near_sf1} , \quad  \begin{bmatrix} \Phixh \\ \Phiuh \end{bmatrix} \in \s . \nonumber
\end{align}
The SLS problem~\eqref{eq:SLS} is quasi-convex and can thus be optimized over. The inner problem is convex, and so a simple bisection on $\gamma$ suffices. Note that the choice of norm on $\ttf \Delta$ must be an induced-norm, this is a sufficient condition for $(I+\ttf \Delta)^{-1}$ to be stable. The cost functional $g$ is chosen to be
\begin{equation*}
\left\| \begin{bmatrix} C_1 & D_{12}\end{bmatrix} \begin{bmatrix} \Phixh \\ \Phiuh \end{bmatrix} \right\|^2.
\end{equation*}
The system norms that we will consider are the $\mathcal{H}_2$, $\Lone$, and the $\mathcal{E}_1$-norms. The $\mathcal{L}_1$-norm of a transfer matrix $\tf G(z)$ is the induced $\ell_{\infty} \to \ell_{\infty}$  norm  which can be computed as 
\begin{equation*}
\|\tf G\|_{\mathcal{L}_1}= \max_{1\le i \le m} \sum_{j=1}^n \sum_{k=0}^{\infty} |G_{ij}[k]|.
\end{equation*}
We define the  $\mathcal E_1$-norm of $\tf G(z)$ as $\|\tf G ^T\|_{\mathcal{L}_1}$. These norms are particularly useful in the context of distributed control because they enjoy a separability property. In particular, The $\htwo$- and $\mathcal{E}_1$-norms are column-wise separable, while the $\Lone$-norm is row-wise separable. Broadly speaking, this means that the norm of the system can be exactly evaluated by computing the norm of each of the columns (rows) individually and then summing the result. The reader is referred to~\cite{WanMD18} for further details.

Finally the constraint $\hat{ \ttf \Phi} \in \s$ encodes temporal and spatial locality constraints on $\{\Phixh,\Phiuh\}$. Together these constraints encode a large class of distributed control problems, and the decomposability provides $O(1)$ synthesis complexity relative to the state-dimension $n$.

\section{Results}
\subsection{Bounding $\ttf \Delta$}\label{sec:delta}
The $\ttf \Delta $ block that appears in Theorem~\ref{thm:robust} allows us to formulate robust control problems, i.e. the design  of a controller that stabilizes  the plant over all realizations of an uncertainty set. In particular, we will consider the ground-truth model to be of the form
\begin{equation*}
A = A_n + \dA, \quad B_2 = B_n + \dB,
\end{equation*}
where
$(A_n, B_n)$ represent the nominal system data and $\dA, \dB$, represent perturbation matrices. The robust problem we are interested in is: given upper-bounds on $\|\dA\|, \|\dB\|$, can we synthesize a robustly stabilizing controller from the nominal system matrices? Consider the case where the nominal system satisfies
\begin{align}\label{eq:aff_pert}
\begin{bmatrix} z I - A_n & - B_n \end{bmatrix} \begin{bmatrix} \Phix \\ \Phiu \end{bmatrix} &=  I,
\end{align}
then
\begin{align*}
\begin{bmatrix} z I - A & - B_2 \end{bmatrix} \begin{bmatrix} \Phix \\ \Phiu \end{bmatrix} &= I +
 \underbrace{\begin{bmatrix}   \dA& \dB \end{bmatrix} \begin{bmatrix} \Phix \\ \Phiu \end{bmatrix}}, \\  &\hspace{2.2cm} =\ttf \Delta
\end{align*}
where $\ttf \Delta$ above is as defined in Theorem~\ref{thm:robust}. Dean et al~\cite{DeaMMRT17} show that  when $\|\dA \|_2\le \rhoA, \|\dB\|_2 \le \rhoB$ then a tractable upper-bound for $\|\ttf \Delta\|_{\mathcal{H}_{\infty}}$ is achievable that takes this information into account.
\begin{lemma}[\cite{DeaMMRT17}]\label{lem:hinf} For $\ttf \Delta$ as defined above and with the bounds $\rhoA, \rhoB$,  for any $\alpha \in (0,1)$
\begin{equation*}
\|\ttf \Delta\|_{\hinf} \le \left\| \left[\begin{array}{c} \frac{\rhoA}{\sqrt{\alpha}}\Phix \\ \frac{\rhoB}{\sqrt{1-\alpha}}\Phiu \end{array} \right] \right\|_{\hinf}.
\end{equation*}
\end{lemma}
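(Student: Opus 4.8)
The plan is to prove the inequality pointwise in frequency on the unit circle and then reduce to a weighted Cauchy--Schwarz estimate whose weighting is exactly the one that reproduces the block structure on the right-hand side. Recall that for $\tf G \in \RHinf$ one has $\hinfnorm{\tf G} = \sup_{\theta}\bar{\sigma}\big(\tf G(e^{j\theta})\big)$ with $\bar{\sigma}$ the largest singular value, and that $\bar{\sigma}(M) = \sup_{\twonorm{v}=1}\twonorm{Mv}$. Since $\ttf \Delta = \begin{bmatrix}\dA & \dB\end{bmatrix}\begin{bmatrix}\Phix \\ \Phiu\end{bmatrix}$, it therefore suffices to bound $\twonorm{\ttf\Delta(z)v}$, uniformly over unit vectors $v$ and over $z = e^{j\theta}$, by $\bar{\sigma}$ of the stacked transfer matrix $\begin{bmatrix}\frac{\rhoA}{\sqrt{\alpha}}\Phix(z) \\ \frac{\rhoB}{\sqrt{1-\alpha}}\Phiu(z)\end{bmatrix}$ evaluated at $z$.

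Fixing such a $z$ and $v$, the first step is to write $\ttf\Delta(z)v = \dA\,\Phix(z)v + \dB\,\Phiu(z)v$ and apply the triangle inequality, sub-multiplicativity of the spectral norm, and the hypotheses $\twonorm{\dA}\le\rhoA$, $\twonorm{\dB}\le\rhoB$, which gives $\twonorm{\ttf\Delta(z)v}\le \rhoA\twonorm{\Phix(z)v} + \rhoB\twonorm{\Phiu(z)v}$. The second step is to rewrite the right-hand side as $\sqrt{\alpha}\cdot\frac{\rhoA}{\sqrt{\alpha}}\twonorm{\Phix(z)v} + \sqrt{1-\alpha}\cdot\frac{\rhoB}{\sqrt{1-\alpha}}\twonorm{\Phiu(z)v}$ and apply Cauchy--Schwarz to the two-dimensional vectors $(\sqrt{\alpha},\sqrt{1-\alpha})$ and $\big(\frac{\rhoA}{\sqrt{\alpha}}\twonorm{\Phix(z)v},\,\frac{\rhoB}{\sqrt{1-\alpha}}\twonorm{\Phiu(z)v}\big)$. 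Since $\alpha + (1-\alpha)=1$, this yields
\[
\twonorm{\ttf\Delta(z)v} \;\le\; \sqrt{\frac{\rhoA^2}{\alpha}\twonorm{\Phix(z)v}^2 + \frac{\rhoB^2}{1-\alpha}\twonorm{\Phiu(z)v}^2} \;=\; \left\| \begin{bmatrix} \frac{\rhoA}{\sqrt{\alpha}}\Phix(z) \\ \frac{\rhoB}{\sqrt{1-\alpha}}\Phiu(z) \end{bmatrix} v \right\|_2 .
\]
Taking the supremum over $\twonorm{v}=1$ bounds $\bar{\sigma}(\ttf\Delta(z))$ by $\bar{\sigma}$ of the stacked matrix at $z$, and then taking the supremum over $\theta$ gives $\hinfnorm{\ttf\Delta} \le \hinfnorm{\begin{bmatrix}\frac{\rhoA}{\sqrt{\alpha}}\Phix \\ \frac{\rhoB}{\sqrt{1-\alpha}}\Phiu\end{bmatrix}}$, which is the claim.

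I do not anticipate a genuine obstacle: the argument is entirely a frequency-by-frequency estimate, and the only ``choice'' --- the $\sqrt{\alpha}$/$\sqrt{1-\alpha}$ splitting of the two terms --- is forced by the form of the desired bound. The two points worth a line of care are (i) that $\Phix,\Phiu\in\frac{1}{z}\RHinf$, so that every pointwise evaluation and supremum above is well defined and finite and the $\hinf$ norm is attained on the unit circle; and (ii) that the entire derivation goes through verbatim for every $\alpha\in(0,1)$, with no value privileged, which is precisely why the lemma is stated for all such $\alpha$ --- the minimization over $\alpha$ is deferred to the point where the bound is used inside the synthesis problem~\eqref{eq:SLS}.
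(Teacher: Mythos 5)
Your proof is correct. Note that the paper itself does not prove this lemma---it is imported verbatim from \cite{DeaMMRT17}---so the only in-paper point of comparison is the proof of the analogous $\Lone$ bound (Lemma~\ref{lem:L1}). Both that proof and the original argument in \cite{DeaMMRT17} proceed at the matrix level: they rewrite $\ttf\Delta = \begin{bmatrix} \frac{\sqrt{\alpha}}{\rhoA}\dA & \frac{\sqrt{1-\alpha}}{\rhoB}\dB\end{bmatrix}\begin{bmatrix} \frac{\rhoA}{\sqrt{\alpha}}\Phix \\ \frac{\rhoB}{\sqrt{1-\alpha}}\Phiu\end{bmatrix}$, apply sub-multiplicativity, and bound the left factor by $1$ using the appropriate block-concatenation inequality (for the spectral norm, $\|[X\ Y]\|_2^2 \le \|X\|_2^2 + \|Y\|_2^2$, giving $\sqrt{\alpha + (1-\alpha)} = 1$). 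Your frequency-by-frequency argument---triangle inequality on $\dA\Phix(z)v + \dB\Phiu(z)v$ followed by scalar Cauchy--Schwarz against $(\sqrt{\alpha},\sqrt{1-\alpha})$---is exactly that same estimate unwound at the level of vectors, so this is essentially the standard route rather than a genuinely different one; your closing remarks about well-posedness on the unit circle and the deferral of the optimization over $\alpha$ are both accurate.
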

The upper-bound above is then used in place of the constraint $\|\ttf \Delta\|_{\mathcal{H}_{\infty}}<\gamma$ in the SLS problem~\eqref{eq:SLS}. Our first results describe how to deal with perturbations that are not described in terms of the $2$-norm. The next two Lemmas are are in the same spirit as the bound above, but are for the $\Lone$ and $\cE_1$-norms.

Assume that we have the bounds $\|\dA\|_{\infty} \le \epsA$ $\|\dB\|_{\infty} \le \epsB$.
 \begin{lemma}\label{lem:L1} Given the scalar bounds $\epsA$ and $\epsB$, then for all $\alpha \in (0,1)$
\begin{align*}
\|\ttf \Delta\|_{\Lone} &\le \left\| \left[\begin{array}{c} \frac{\epsA}{\alpha}\Phix \\ \frac{\epsB}{1-\alpha}\Phiu \end{array} \right] \right\|_{\Lone}
=   \max  \left\{  \frac{\epsA}{\alpha}\| \Phix\|_{\Lone},  \frac{\epsB}{1-\alpha}\| \Phiu\|_{\Lone}  \right\}.
\end{align*}
\end{lemma}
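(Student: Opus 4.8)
The plan is to follow the same template as the proof of Lemma~\ref{lem:hinf}, but with the Hilbert-space Cauchy--Schwarz step replaced by the elementary facts about the induced $\ell_\infty \to \ell_\infty$ operator norm. First I would expand the uncertainty block as $\ttf\Delta = \begin{bmatrix}\dA & \dB\end{bmatrix}\begin{bmatrix}\Phix\\\Phiu\end{bmatrix} = \dA\Phix + \dB\Phiu$, where $\dA$ and $\dB$ are viewed as static (memoryless) transfer matrices. Since $\|\cdot\|_{\Lone}$ is an induced norm, it is both subadditive and submultiplicative, so the triangle inequality followed by submultiplicativity gives $\|\ttf\Delta\|_{\Lone} \le \|\dA\|_{\Lone}\|\Phix\|_{\Lone} + \|\dB\|_{\Lone}\|\Phiu\|_{\Lone}$.

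Second, I would record that the $\Lone$-norm of a static gain equals the induced matrix $\infty$-norm: for a constant matrix $M$ one has $\|M\|_{\Lone} = \max_i \sum_j |M_{ij}| = \|M\|_\infty$, either directly from the formula $\|\tf G\|_{\Lone} = \max_i\sum_j\sum_k|G_{ij}[k]|$ (only the $k=0$ term survives) or by noting a memoryless map has no memory to exploit. Hence $\|\dA\|_{\Lone} = \|\dA\|_\infty \le \epsA$ and $\|\dB\|_{\Lone} = \|\dB\|_\infty \le \epsB$, which yields $\|\ttf\Delta\|_{\Lone} \le \epsA\|\Phix\|_{\Lone} + \epsB\|\Phiu\|_{\Lone}$.

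Third, for any $\alpha\in(0,1)$ I would rewrite the bound as a convex combination, $\epsA\|\Phix\|_{\Lone} + \epsB\|\Phiu\|_{\Lone} = \alpha\cdot\tfrac{\epsA}{\alpha}\|\Phix\|_{\Lone} + (1-\alpha)\cdot\tfrac{\epsB}{1-\alpha}\|\Phiu\|_{\Lone} \le \max\bigl\{\tfrac{\epsA}{\alpha}\|\Phix\|_{\Lone},\tfrac{\epsB}{1-\alpha}\|\Phiu\|_{\Lone}\bigr\}$, and then identify the right-hand side with $\bigl\|\bigl[\tfrac{\epsA}{\alpha}\Phix;\ \tfrac{\epsB}{1-\alpha}\Phiu\bigr]\bigr\|_{\Lone}$ using row-wise separability: the rows of the stacked transfer matrix are precisely the rows of the two scaled blocks, so its maximum absolute row sum is the larger of the two blockwise maxima. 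There is no genuinely hard step; the only points that need a sentence of justification are the submultiplicativity of $\Lone$ and the static-gain identity $\|M\|_{\Lone}=\|M\|_\infty$, together with being careful to attach the weights $\tfrac1\alpha,\tfrac1{1-\alpha}$ to $\dA,\dB$ rather than to $\Phix,\Phiu$ before applying the convexity bound, so that the resulting expression is one the SLS program~\eqref{eq:SLS} can optimize over. The same argument, applied after transposing and using column-wise separability, will give the $\cE_1$ analogue.
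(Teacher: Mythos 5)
Your proposal is correct and follows essentially the same route as the paper: both rest on submultiplicativity of the induced $\ell_\infty\to\ell_\infty$ norm, the identification of the $\Lone$-norm of a static gain with the matrix $\infty$-norm, the convex-combination observation $\alpha a + (1-\alpha)b \le \max\{a,b\}$, and the row-wise separability identity $\bigl\|[X;\,Y]\bigr\|_{\Lone}=\max\{\|X\|_{\Lone},\|Y\|_{\Lone}\}$. The only cosmetic difference is that the paper keeps $\ttf\Delta$ as the block product $\bigl[\tfrac{\alpha}{\epsA}\dA\ \ \tfrac{1-\alpha}{\epsB}\dB\bigr]\bigl[\tfrac{\epsA}{\alpha}\Phix;\ \tfrac{\epsB}{1-\alpha}\Phiu\bigr]$ and bounds the left factor by $1$, whereas you expand $\ttf\Delta$ as the sum $\dA\Phix+\dB\Phiu$ first and introduce the $\alpha$-weights only at the final scalar stage.
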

\begin{proof}
From the definition of $\tf \Delta$ we have
\begin{align*}
&\|\tf \Delta \|_{\Lone}  = \left\|\left[  \begin{array}{cc} \frac{\alpha}{\epsA}\dA & \frac{(1-\alpha)}{\epsB}\dB \end{array} \right] \left[  \begin{array}{c} \frac{\epsA}{\alpha}\Phix \\ \frac{\epsB}{1-\alpha}\Phiu \end{array}   \right]    \right\|_{\Lone} \\
&\le
\left\|  \left[  \begin{array}{cc} \frac{\alpha}{\epsA}\dA & \frac{(1-\alpha)}{\epsB}\dB \end{array} \right] \right\|_{\Lone}
\left\|\left[  \begin{array}{c} \frac{\epsA}{\alpha}\Phix \\ \frac{\epsB}{1-\alpha}\Phiu \end{array}   \right] \right\|_{\Lone}\nonumber \\
& \le \left(  \left\| \frac{\alpha}{\epsA}\dA \right\|_{\Lone}+ \left\| \frac{(1-\alpha)}{\epsB}\dB \right\|_{\Lone}  \right)\left\|\left[  \begin{array}{c} \frac{\epsA}{\alpha}\Phix \\ \frac{\epsB}{1-\alpha}\Phiu \end{array}   \right] \right\|_{\Lone} \\ &=
\left(  \frac{\alpha}{\epsA} \left\| \dA \right\|_{\Lone}+ \frac{(1-\alpha)}{\epsB} \left\| \dB \right\|_{\Lone}  \right)\left\|\left[  \begin{array}{c} \frac{\epsA}{\alpha}\Phix \\ \frac{\epsB}{1-\alpha}\Phiu \end{array}   \right] \right\|_{\Lone}  \\ & \le
 \left( \frac{\alpha}{\epsA}\epsA + \frac{(1-\alpha)}{\epsB}\epsB  \right) \left\|\left[  \begin{array}{c} \frac{\epsA}{\alpha}\Phix \\ \frac{\epsB}{1-\alpha}\Phiu \end{array}   \right] \right\|_{\Lone} \\& =
 \left\|\left[  \begin{array}{c} \frac{\epsA}{\alpha}\Phix \\ \frac{\epsB}{1-\alpha}\Phiu \end{array}   \right] \right\|_{\Lone}\\
 & = \max  \left\{  \frac{\epsA}{\alpha}\| \Phix\|_{\Lone},  \frac{\epsB}{1-\alpha}\| \Phiu\|_{\Lone}  \right\}.
\end{align*}
 The first inequality results from applying the triangle inequality, the second comes from the fact that for $ B \in \C^{m \times n_1}$ and $C \in \C^{m\times n_2}$
\begin{equation*}
\|[B ~ C ]\|_{\infty} \le \| B \|_{\infty}+\| C\|_{\infty},
\end{equation*}
(and noting that $\Lone$-norm of a constant matrix is simply the standard matrix induced $\infty$-norm.) and the final inequality comes from substituting in the upper bounds. The last equality follows since
 \begin{equation*}
  \left\| \left[ \begin{array}{c}  X \\  Y \end{array}\right]\right\|_{\infty}= \max \left\{ \| X \|_{\infty}, \| Y \|_{\infty}\right\}.
  \end{equation*}
\end{proof}

Now, assume instead that we have the bounds    $\|\dA\|_{1} \le \nuA$, $\|\dB\|_{1} \le \nuB$.
\begin{lemma}\label{lem:E1}Given the scalar bounds $\nu_{\tf{A}},\nu_{\tf{B}}$, the following bound holds:
\begin{equation*}
\|\ttf \Delta\|_{\cE_1} \le  \left\| \left[ \begin{array}{c} \nu_{\tf{A}} \Phix \\ \nu_{\tf{B}} \Phiu \end{array} \right] \right\|_{\cE_1}.
\end{equation*}
\end{lemma}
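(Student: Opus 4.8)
The plan is to mirror the proof of Lemma~\ref{lem:L1}, but in transposed form. Since $\|\tf G\|_{\cE_1} = \|\tf G^{\tp}\|_{\Lone}$ and $\|\cdot\|_{\Lone}$ is a sub-multiplicative induced norm, so is $\|\cdot\|_{\cE_1}$; that is, $\|\tf G \tf H\|_{\cE_1} \le \|\tf G\|_{\cE_1}\,\|\tf H\|_{\cE_1}$ for conformable transfer matrices. Assuming $\nuA,\nuB > 0$ (the degenerate cases are immediate), I would first insert a block-diagonal scaling into the factorization of $\ttf\Delta$ from Section~\ref{sec:delta}:
\begin{equation*}
\ttf\Delta = \begin{bmatrix} \dA & \dB \end{bmatrix}\begin{bmatrix} \Phix \\ \Phiu \end{bmatrix} = \begin{bmatrix} \tfrac{1}{\nuA}\dA & \tfrac{1}{\nuB}\dB \end{bmatrix}\begin{bmatrix} \nuA\Phix \\ \nuB\Phiu \end{bmatrix}.
\end{equation*}
Applying sub-multiplicativity of $\|\cdot\|_{\cE_1}$ then gives
\begin{equation*}
\|\ttf\Delta\|_{\cE_1} \;\le\; \left\| \begin{bmatrix} \tfrac{1}{\nuA}\dA & \tfrac{1}{\nuB}\dB \end{bmatrix} \right\|_{\cE_1} \left\| \begin{bmatrix} \nuA\Phix \\ \nuB\Phiu \end{bmatrix} \right\|_{\cE_1},
\end{equation*}
so it suffices to show the first factor is at most $1$.

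For that first factor, I would use $\|\tf G\|_{\cE_1}=\|\tf G^{\tp}\|_{\Lone}$ to turn the horizontal concatenation into a vertical one,
\begin{equation*}
\left\| \begin{bmatrix} \tfrac{1}{\nuA}\dA & \tfrac{1}{\nuB}\dB \end{bmatrix} \right\|_{\cE_1} = \left\| \begin{bmatrix} \tfrac{1}{\nuA}\dA^{\tp} \\ \tfrac{1}{\nuB}\dB^{\tp} \end{bmatrix} \right\|_{\Lone},
\end{equation*}
and then invoke two facts already used in the proof of Lemma~\ref{lem:L1}: (i) the $\Lone$-norm of a constant matrix equals its induced $\infty$-norm, and (ii) $\|\cdot\|_{\Lone}$ is row-wise separable, so the $\Lone$-norm of a vertical stack is the maximum of the blocks' $\Lone$-norms. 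Together these reduce the right-hand side to $\max\{\tfrac{1}{\nuA}\|\dA^{\tp}\|_{\infty},\,\tfrac{1}{\nuB}\|\dB^{\tp}\|_{\infty}\}$, and since the induced $\infty$-norm of a transpose equals the induced $1$-norm of the original matrix, this equals $\max\{\tfrac{1}{\nuA}\|\dA\|_{1},\,\tfrac{1}{\nuB}\|\dB\|_{1}\} \le 1$ by the hypotheses $\|\dA\|_{1}\le\nuA$ and $\|\dB\|_{1}\le\nuB$. Combining with the previous display proves the lemma.

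I do not expect a real obstacle here; the only subtlety worth flagging is \emph{why} no convex-combination parameter $\alpha$ is needed, in contrast to Lemmas~\ref{lem:hinf} and~\ref{lem:L1}. The reason is exactly the row-wise separability of $\|\cdot\|_{\Lone}$: after transposition the scaled disturbance block $[\,\tfrac{1}{\nuA}\dA\ \ \tfrac{1}{\nuB}\dB\,]$ becomes a \emph{vertical} stack, whose $\Lone$-norm is the \emph{maximum} of the two weighted blocks rather than their sum, so the weights $\tfrac{1}{\nuA}\|\dA\|_1$ and $\tfrac{1}{\nuB}\|\dB\|_1$ need not be balanced against each other. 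An equivalent, more hands-on route would bypass sub-multiplicativity altogether: exploit column-wise separability of $\|\cdot\|_{\cE_1}$, bound each column $\dA\Phix e_j + \dB\Phiu e_j$ by the triangle inequality together with sub-multiplicativity of $\|\cdot\|_{\Lone}$ applied to the transposed column, recognize the result as the $j$th column of $[\nuA\Phix;\nuB\Phiu]$, and take the maximum over $j$; this yields the identical bound and may be the cleaner exposition.
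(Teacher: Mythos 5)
Your proposal is correct and follows exactly the route the paper intends: its own proof of this lemma is only the one-line remark ``follows similar arguments to the proof of Lemma~\ref{lem:L1},'' and your writeup supplies precisely those arguments in transposed form, using $\|\tf G\|_{\cE_1}=\|\tf G^\tp\|_{\Lone}$, sub-multiplicativity, and the reduction of the scaled block $[\tfrac{1}{\nuA}\dA\ \ \tfrac{1}{\nuB}\dB]$ to $\max\{\tfrac{1}{\nuA}\|\dA\|_1,\tfrac{1}{\nuB}\|\dB\|_1\}\le 1$. Your side observation about why the parameter $\alpha$ disappears --- the transposed disturbance block becomes a vertical stack whose $\Lone$-norm is a \emph{maximum} rather than a sum, so no convex balancing is needed --- is correct and explains a structural difference between the two lemma statements that the paper leaves implicit.
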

\begin{proof}
Follows similar arguments to the proof of Lemma~\ref{lem:L1}.
\end{proof}
Thus Lemmas~\ref{lem:hinf}--\ref{lem:E1} provide upper-bounds that incorporate the perturbation magnitude (in terms of three different norms of $\dA, \dB$) into the SLS problem~\eqref{eq:SLS}. Furthermore, the $\Lone$ and $\mathcal{E}_1$-norm bounds derived here are row- and column-wise separable making them immediately useful for distributed synthesis~\cite{WanMD18}.

\subsection{Sparse Approximation}
From the definition of the matrix exponential, the most obvious way of constructing a sparse approximation is via truncation. Given a sample parameter $\tau>0$, truncating the exponential after  $k=1$ terms we have
\begin{equation*}
A^{trunc}_{\tau} := I+A\tau \approx e^{\hat A \tau}.
\end{equation*}
Clearly $A^{trunc}_{\tau}$ has the same sparsity as $\hat A$. Indeed, this approximation is the basis for first-order Euler methods for solving initial point problems. It is well known that truncation methods do not preserve stability. In the language of Section~\ref{sec:delta}, $A^{trunc}_{\tau} = A_n.$

One advantage of this approximation method is that there is a clean bound for the error.
\begin{theorem}[\cite{MolV03}]\label{thm:trunc}
Given a matrix ${\hat{ A}} \in \mathbb{C}^{n \times n}$ and a constant $\tau > 0$, then
\begin{equation*}
\| A_{\tau}^{trunc}-e^{\hat{A} \tau}\|_2 \le \left( \frac{\|\hat{A}\|_2^2\tau^2}{2} \right) \left(\frac{1}{1- \frac{\tau}{3}\|\hat{A}\|_2} \right).
\end{equation*}
\end{theorem}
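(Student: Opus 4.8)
The plan is to realize the approximation error as the tail of the exponential series and compare it, term by term, with a geometric series. Writing $A_\tau^{trunc} = I + \hat A\tau$ as the truncation of $e^{\hat A\tau} = \sum_{k\ge 0}\frac{1}{k!}(\hat A\tau)^k$ after the $k=0,1$ terms, we get
\[
A_\tau^{trunc} - e^{\hat A\tau} \;=\; -\sum_{k=2}^{\infty}\frac{1}{k!}(\hat A\tau)^k .
\]
By the triangle inequality and submultiplicativity of the spectral norm,
\[
\|A_\tau^{trunc} - e^{\hat A\tau}\|_2 \;\le\; \sum_{k=2}^{\infty}\frac{1}{k!}\|\hat A\|_2^k\,\tau^k \;=\; \sum_{k=2}^{\infty}\frac{a^k}{k!},
\]
with $a := \tau\|\hat A\|_2$, so the whole statement reduces to a scalar estimate on $\sum_{k\ge 2}a^k/k!$.

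Next I would pull out the leading power: $\sum_{k=2}^\infty \frac{a^k}{k!} = \frac{a^2}{2}\sum_{j=0}^\infty \frac{2}{(j+2)!}\,a^j$. The crux is the elementary factorial bound $(j+2)! \ge 2\cdot 3^j$ for every integer $j\ge 0$, i.e.\ $\frac{2}{(j+2)!}\le 3^{-j}$. This follows from a one-line ratio argument: the sequence $r_j := (j+2)!/(2\cdot 3^j)$ satisfies $r_0 = 1$ and $r_{j+1}/r_j = (j+3)/3 \ge 1$, hence $r_j \ge 1$ for all $j$. Substituting, $\sum_{k=2}^\infty \frac{a^k}{k!} \le \frac{a^2}{2}\sum_{j=0}^\infty (a/3)^j$.

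Finally, in the regime $a/3 = \frac{\tau}{3}\|\hat A\|_2 < 1$ — exactly the condition under which the right-hand side of the theorem is finite — the geometric series sums to $(1-a/3)^{-1}$, and we obtain
\[
\|A_\tau^{trunc} - e^{\hat A\tau}\|_2 \;\le\; \frac{a^2}{2}\cdot\frac{1}{1-a/3} \;=\; \Bigl(\tfrac{\|\hat A\|_2^2\tau^2}{2}\Bigr)\Bigl(\tfrac{1}{1-\tfrac{\tau}{3}\|\hat A\|_2}\Bigr),
\]
which is the claim. I do not expect a genuine obstacle; the only point requiring a moment's thought is the choice of the comparison ratio $1/3$ in the factorial estimate. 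It is forced by the $j=1$ term (where $\frac{2}{(j+2)!}=\tfrac13$), and since the resulting bound $\frac{a^2}{2(1-ca)}$ is increasing in the ratio $c$, taking $c=1/3$ yields the tightest bound of this form — and, conveniently, the one matching the denominator $1-\tfrac{\tau}{3}\|\hat A\|_2$ in the statement.
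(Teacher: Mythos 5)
Your argument is correct and is essentially the standard proof of this bound: the paper itself states the result without proof, citing Moler and Van Loan, and their general estimate for the order-$k$ Taylor truncation is obtained exactly as you do it (bound the series tail term by term, then dominate $\tfrac{(k+1)!}{(k+1+i)!}$ by $(k+2)^{-i}$ to sum a geometric series), of which your computation is the $k=1$ case. You also correctly flag the implicit hypothesis $\tau\|\hat A\|_2 < 3$, without which the stated right-hand side is negative or undefined --- a restriction the theorem omits but the paper tacitly acknowledges when it calls the bound ``useless for large values'' of $\tau$.
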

This upper-bound would take the form of $\rhoA$ in our robust control problem.

In practice, the bound Theorem~\ref{thm:trunc} is sharp for small $\tau$ and useless for large values. We now propose a second more accurate method for computing an estimate of the matrix exponential. The method is simple; compute the full matrix exponential and then project it onto the support of $\hat A$. Formally, we define support matrix  $H$ as
\begin{equation*}
[\supp( H)]_{ij} = \left\{ \begin{array}{cl} 1 & \text{if } H_{ij}\neq 0 \\ 0 & \text{otherwise}\end{array}.  \right.
\end{equation*}
For a fixed constant $\tau $, the \emph{projected exponential} is given by
\begin{equation*}
A^{proj}_{\tau}:= \supp (|\hat{A}|+|I|)\circ e^{\hat A \tau} \approx e^{\hat{ A}\tau}
\end{equation*}

where $\circ$ denotes the Hadamard (element-wise) product between two matrices of equal dimension. Note that the $\supp$ operation binds before the Hadamard product. We use the notation $| \cdot |$ to denote the element-wise absolute value when applied to matrix.

For the projection of $\hat B_i$, we use $\supp(|\hat{A}|+|I|)$ to approximate $\supp (\int_{0}^{\tau}e^{\hat A\lambda} \mathrm{d} \lambda)$ and have:
\begin{equation*}
  B^{proj}_{i,\tau}:=\supp ((|\hat{A}|+|I|)\cdot |\hat{B}_i|)\circ \int_{0}^{\tau}e^{\hat A\lambda} \mathrm{d} \lambda \hat{B}_i.
\end{equation*}
 In order to obtain bounds on the approximation error we will need to impose some structure on the continuous drift-matrix ${ \hat{A}}$.
\begin{definition}\label{def:band}
Given a matrix $Y \in \C^{n\times n}$, the bandwidth of $ Y$ is the smallest integer $s$ such that $ Y_{jk}=0$ for all $|j-k|\ge s+1$. We use $Y_{(s)}$ to denote that $Y$ has has bandwidth $s$.
\end{definition}
According to Definition~\ref{def:band} a diagonal matrix has a bandwidth of zero, a tridiagonal matrix has bandwidth $s=1$, etc.

Let $E_{ij} = e_ie_j^T$ where $e_i$ is the standard $i^{th}$ basis vector for $\R^n$. Then a bandwidth $s$ matrix can be extracted from a dense matrix via $Y_{(s)} = \sum_{|i-j|\le s}E_{ii}YE_{jj}$, where the summation is taken over all pairs $\{i,j\}$ that satisfy $|i-j|\le s$. 
\begin{assumption}\label{as:band}
The $n\times n$ drift matrix $\hat{A}$ from $\eqref{eq:sys_cont}$ is a banded matrix, or, there exists a permutation matrix $\Pi$ such that $\Pi \hat{A} \Pi^{-1}$ is banded.
\end{assumption}

\begin{figure*}[!tb]
\centering
\includegraphics[scale=.35]{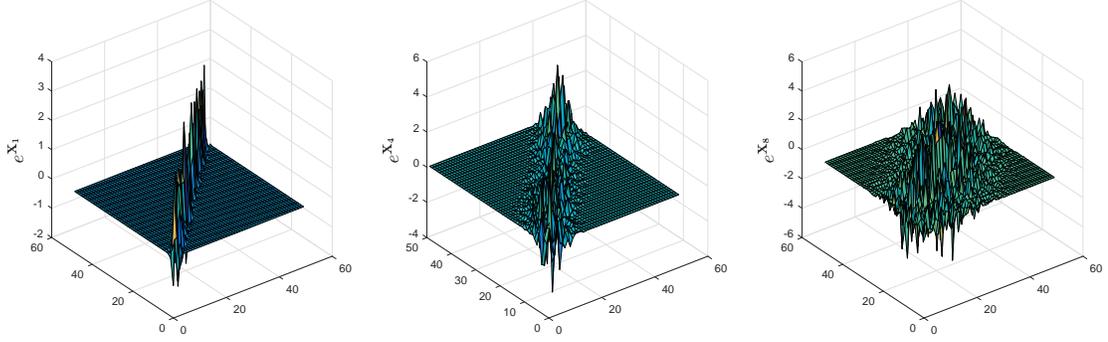}
\caption{The matrix exponential of three banded matrices ${X_1}, {X_4},$ and ${X_8}$ with bandwidth $1,4$, and $8$ are shown from left to right respectively.}
\label{fig:exp_mesh_band}
\end{figure*}

We note that this is not a major assumption, many applications produce matrices naturally in this form, examples can be found in the online catalogue described in~\cite{FLORIDA} and in the recent work~\cite{Vo17}. While the exponential of a banded matrix is formally dense, numerical analysts have noted that elements away from the diagonal decay rapidly. Specifically, ``\emph{provided that $A$ is a banded matrix, $e^{A}$ is itself within an exceedingly small distance from a banded matrix.}''\cite{Ise00}. This observation is illustrated qualitatively in Figure~\ref{fig:exp_mesh_band}.

Recall the (non sub-multiplicative) max-norm of a matrix $X$ defined as
\begin{equation*}
\|X\|_{\text{max}} := \max_{k,l} \quad |X_{kl}|.
\end{equation*}
The following result provides a bound on the elements of the matrix exponential obtained from a banded matrix.
\begin{theorem}[\cite{Ise00}]\label{thm:Iserles}
Let $A = e^{\hat{ A}\tau}$ for some $\tau >0$, where $\hat{A}$ is a banded matrix with bandwidth $s\ge 1$. Let $\alpha = \|\hat{A}\tau \|_{\text{max}}$.Then for $|i-j| \gg 1$,
\begin{align*}
|A_{ij}| &\le \underbrace{\left(\frac{\alpha s}{|i-j|} \right)^{\frac{|i-j|}{s}}\left[e^{\frac{|i-j|}{s}}- \sum_{m=0}^{|i-j|-1}\frac{(|i-j|/s)^m}{m!}\right]} .\\ & \hspace{3.8cm}\mathcal{B}_{ij}( \alpha,s)
\end{align*}
\end{theorem}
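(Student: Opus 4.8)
The plan is to combine the power-series definition of the matrix exponential with the observation that raising a banded matrix to the $k$-th power only spreads its support by $ks$, then reduce to the exponential of the worst-case $0$--$1$ band matrix, and finally read off the off-diagonal decay from a Laurent-coefficient (contour) integral estimated by a steepest-descent argument. Write $d:=|i-j|$ throughout, and recall $\alpha=\|\hat A\tau\|_{\text{max}}$, so $\|\hat A\|_{\text{max}}=\alpha/\tau$.

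First I would expand $A=e^{\hat A\tau}=\sum_{k\ge0}\tau^k\hat A^k/k!$. Since $\hat A$ has bandwidth $s$, the product $\hat A^k$ has bandwidth at most $ks$, hence $(\hat A^k)_{ij}=0$ whenever $ks<d$; thus $A_{ij}=\sum_{k\ge d/s}\tau^k(\hat A^k)_{ij}/k!$. This shift of the starting index from $0$ to roughly $d/s$ is exactly what produces the super-exponential factor $(\,\cdot\,/d)^{d/s}$. Next, since every entry of $\hat A$ has modulus at most $\alpha/\tau$, one gets the entrywise domination $|(\hat A^k)_{ij}|\le(\alpha/\tau)^k(M^k)_{ij}$, where $M$ is the symmetric $0$--$1$ Toeplitz band matrix of bandwidth $s$ (the worst-case support, whose powers count the index walks $i\to j$ using steps of size $\le s$). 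Because $(\alpha M)^k$ is entrywise nonnegative, summing the series gives $|A_{ij}|\le(e^{\alpha M})_{ij}$, and replacing the finite $M$ by its bi-infinite Toeplitz extension only enlarges these nonnegative entries, so it suffices to bound the bi-infinite case.

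For the bi-infinite band matrix $M$ with symbol $\mu(z)=\sum_{\ell=-s}^{s}z^\ell$, the entry $(e^{\alpha M})_{i,i+d}$ is the $d$-th Laurent coefficient of $e^{\alpha\mu(z)}$, i.e. $\frac{1}{2\pi i}\oint_{|z|=r}e^{\alpha\mu(z)}z^{-d-1}\,dz$ for any $r>0$. Since $\mu$ has nonnegative coefficients, $|e^{\alpha\mu(z)}|\le e^{\alpha\mu(r)}$ on $|z|=r$, so $(e^{\alpha M})_{i,i+d}\le r^{-d}e^{\alpha\mu(r)}$. Splitting off the top term, $\mu(r)=r^{s}+(r^{-s}+\cdots+r^{s-1})$, and choosing the saddle $r=(d/(s\alpha))^{1/s}$ turns $r^{-d}$ into $(\alpha s/d)^{d/s}$ and $e^{\alpha r^{s}}$ into $e^{d/s}=\sum_{m\ge0}(d/s)^m/m!$; feeding back the fact from the previous step that only the terms $k\ge d/s$ of the exponential series actually survive in $(e^{\alpha M})_{i,i+d}$ replaces the full sum $e^{d/s}$ by its tail $e^{d/s}-\sum_{m=0}^{d-1}(d/s)^m/m!$, which is precisely the incomplete-gamma factor $\mathcal B_{ij}(\alpha,s)$, while the residual factor $e^{\alpha(r^{-s}+\cdots+r^{s-1})}$ is absorbed into the $|i-j|\gg1$ asymptotics.

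The first two steps are routine; the real work, and the place the hypothesis $|i-j|\gg1$ is consumed, is the third step. One must (i) justify trading the crude bound $e^{\alpha\mu(r)}$ for the truncated series appearing in $\mathcal B_{ij}(\alpha,s)$ — this is where the bandwidth cutoff $k\ge d/s$ re-enters in a quantitative way and forces the large-$|i-j|$ regime — and (ii) show that the subdominant part $r^{-s}+\cdots+r^{s-1}$ of the symbol, evaluated at the saddle, contributes only lower-order corrections so that the constant $\alpha s/|i-j|$ is not degraded (this is nontrivial precisely for $s\ge2$, where the naive estimates are genuinely weaker than the stated one). A fully rigorous version of (i)--(ii) amounts to a Laplace/steepest-descent asymptotic analysis of the Laurent integral; short of carrying that out in detail, the inequality should be read as the leading-order estimate it is announced to be.
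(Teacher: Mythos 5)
The paper contains no proof of this statement: Theorem~\ref{thm:Iserles} is imported verbatim from \cite{Ise00}, so your attempt can only be measured against the source. Your first two steps are correct and rigorous: the bandwidth of $\hat A^k$ is at most $ks$, so only terms $k\ge\lceil d/s\rceil$ of the series contribute; entrywise domination by the $0$--$1$ banded Toeplitz matrix $M$ with symbol $\mu(z)=\sum_{\ell=-s}^{s}z^\ell$ gives $|A_{ij}|\le(e^{\alpha M})_{i,i+d}$; and the Laurent-coefficient bound $(e^{\alpha M})_{i,i+d}\le r^{-d}e^{\alpha\mu(r)}$ with $r=(d/(s\alpha))^{1/s}$ yields $(s\alpha/d)^{d/s}\,e^{d/s}\,e^{\alpha(\mu(r)-r^{s})}$. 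This is essentially the generating-function form of Iserles' walk-counting argument, and the trailing factor $e^{\alpha(r^{-s}+\cdots+r^{s-1})}=e^{O(d^{(s-1)/s})}$ is exactly where the hypothesis $|i-j|\gg1$ is consumed, as you correctly identify in point (ii).

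The gap is in your final sharpening, and it is not merely a missing computation: the mechanism you invoke cannot produce the factor as printed. Your observation that only terms $k\ge\lceil d/s\rceil$ survive, applied term-by-term inside the contour bound, gives $r^{-d}\bigl[e^{\alpha\mu(r)}-\sum_{m=0}^{\lceil d/s\rceil-1}(\alpha\mu(r))^m/m!\bigr]$, i.e.\ a series truncated at $m=\lceil d/s\rceil$, \emph{not} at $m=d$. That truncation point is what \cite{Ise00} actually proves; the statement as transcribed here, with the inner sum running to $|i-j|-1$, is strictly smaller and is in fact false for $s\ge2$. Concretely, take $\hat A\tau$ to be the pentadiagonal all-ones matrix ($s=2$, $\alpha=1$) with $d=|i-j|=20$: the single walk of $10$ steps of size $2$ already gives $|A_{ij}|\ge 1/10!\approx2.8\times10^{-7}$, whereas $\mathcal B_{ij}=(0.1)^{10}\bigl[e^{10}-\sum_{m=0}^{19}10^m/m!\bigr]\approx7.7\times10^{-9}$, and the ratio worsens like $(2e^{-1/2})^{d}$ as $d$ grows. (For $s=1$ the two truncation points coincide and both your bound and the printed one are fine.) So your route is the right one and, carried out honestly, it reproduces the cited theorem with the sum truncated at $\lceil|i-j|/s\rceil-1$; the step you flagged as ``the real work'' fails only because you forced the answer to match an over-truncated target.
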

Under the assumption that $\hat{A}$ from~\eqref{eq:sys_cont} is banded (with bandwidth $s$), we define $A = A_n + \dA$ where $A_n = A_{\tau}^{proj}$, thus $A_n$ is banded and $\dA$ is is the complement of a banded matrix. Appealing to theorem~\ref{thm:Iserles} we can derive upper-bounds on $\|\dA\|$ for various norms. Moreover, the upper-bounds can be computed using only scalar operations. For any choice of norm it follows that
\begin{align*}
\|\dA\| = \left\|\sum_{|i-j|>s} E_{ii}e^{\hat{A}\tau}E_{jj} \right\| &\le \sum_{|i-j|>s} \left\| E_{ii}e^{\hat{A}\tau}E_{jj} \right\| \\
& = \sum_{|i-j|>s} |[e^{\hat{A}\tau}]_{ij}| \\
& \le \sum_{|i-j|>s} \mathcal{B}_{ij}(\alpha, s).
\end{align*}
Define $\rhoA^{\star}, \epsA^{\star}, $ and $\nuA^{\star}$ to be $\|\dA\|$ for the $2,\infty,$ and $1$-norm respectively.
\begin{theorem}\label{thm:Abounds}
Let $\rhoA^{\star}, \epsA^{\star}, $ and $\nuA^{\star}$ be defined as above. Given a matrix $\hat A \in \R^{n\times n}$ of bandwidth $s$. Then $\rhoA^{\star}, \epsA^{\star}, $ and $\nuA^{\star}$ can be upper-bounded by computations that involve only scalar operations on the indices $i,j$ and parameters $\alpha$ and $s$. Specifically,
\begin{equation*}
\rhoA^{\star} \le \sum_{|i-j|>s} \mathcal{B}_{ij}(\alpha, s), ~~ \epsA^{\star} \le \max_{1 \le i \le n} \sum_{\tiny{\begin{array}{c}j=1\\ |i-j|>s \end{array}}}^n  \mathcal{B}_{ij}(\alpha, s),
\end{equation*}
\begin{equation*}
\text{and } \nuA^{\star} \le  \max_{1 \le j \le n} \sum_{\tiny{\begin{array}{c}i=1\\ |i-j|>s \end{array}}}^n  \mathcal{B}_{ij}(\alpha, s).
\end{equation*}
\end{theorem}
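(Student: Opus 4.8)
The plan is to establish each of the three bounds by combining the off-band decomposition $\dA = \sum_{|i-j|>s} E_{ii}\,e^{\hat A\tau}\,E_{jj}$ with the entrywise estimate of Theorem~\ref{thm:Iserles}, and then specializing to each induced norm the generic chain of inequalities already displayed just above the theorem statement. For $\rhoA^{\star} = \|\dA\|_2$ there is essentially nothing new to do: apply the triangle inequality to the sum over off-band index pairs, use the fact that $E_{ii}\,e^{\hat A\tau}\,E_{jj} = [e^{\hat A\tau}]_{ij}\,E_{ij}$ is a rank-one matrix whose only nonzero entry is $[e^{\hat A\tau}]_{ij}$, so that $\|E_{ii}\,e^{\hat A\tau}\,E_{jj}\|_2 = |[e^{\hat A\tau}]_{ij}|$, and then invoke Theorem~\ref{thm:Iserles} to replace $|[e^{\hat A\tau}]_{ij}|$ by $\mathcal{B}_{ij}(\alpha,s)$.

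For $\epsA^{\star} = \|\dA\|_\infty$, I would recall that the induced $\ell_\infty\to\ell_\infty$ norm of a constant matrix is its maximum absolute row sum, $\|\dA\|_\infty = \max_{1\le i\le n}\sum_{j=1}^n |[\dA]_{ij}|$. Since $[\dA]_{ij} = [e^{\hat A\tau}]_{ij}$ whenever $|i-j|>s$ and $[\dA]_{ij}=0$ otherwise, the inner sum ranges only over those $j$ with $|i-j|>s$, and bounding each surviving term by $\mathcal{B}_{ij}(\alpha,s)$ via Theorem~\ref{thm:Iserles} yields the stated inequality. The bound for $\nuA^{\star} = \|\dA\|_1$ is obtained identically, with "row" replaced by "column" (the induced $\ell_1\to\ell_1$ norm is the maximum absolute column sum) and the roles of $i$ and $j$ interchanged. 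In all three cases the right-hand sides are sums of explicitly computable scalar expressions in $i, j, \alpha, s$, with no matrix operations whatsoever, which is precisely the content of the theorem.

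The one genuine subtlety — and the step I expect to require the most care — is that Theorem~\ref{thm:Iserles} is stated only in the asymptotic regime $|i-j|\gg 1$, whereas the sums above also include index pairs with $|i-j|$ only slightly larger than $s$. To make the argument fully rigorous one should split each sum into a finite collection of near-band terms, controlled by a crude estimate such as $|[e^{\hat A\tau}]_{ij}|\le \|e^{\hat A\tau}\|_{\text{max}}$, and a tail over large $|i-j|$ where the Iserles bound applies sharply; alternatively one may simply observe that $\mathcal{B}_{ij}(\alpha,s)$ is vacuous (at least $1$, hence no constraint) precisely in the near-band region, so that using it as a blanket upper bound everywhere is harmless. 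Either reading preserves the qualitative conclusion that the dominant contribution to $\|\dA\|$ comes from the rapidly decaying off-diagonal tail, and that the whole quantity can be upper-bounded by a scalar computation.
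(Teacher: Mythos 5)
Your argument is essentially identical to the paper's own (very terse) proof: the paper likewise decomposes $\dA = \sum_{|i-j|>s} E_{ii}e^{\hat A\tau}E_{jj}$, applies the triangle inequality and the entrywise bound of Theorem~\ref{thm:Iserles}, and then invokes the max-row-sum and max-column-sum characterizations of the $\infty$- and $1$-norms. Your closing remark about the $|i-j|\gg 1$ asymptotic caveat in Theorem~\ref{thm:Iserles} is a legitimate subtlety that the paper silently ignores, so flagging it only strengthens the argument.
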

\begin{proof}
The proof follows by applying Theorem~\ref{thm:Iserles}, the inequalities derived above, and the definition of the $1$ and $\infty$ norms.
\end{proof}
Similar bounds for the various norms  of $\|\dB\|$ are easily derived.
In Figure~\ref{fig:boundtest} the upper-bound on $\|\dA\|_2$ from Theorem~\ref{thm:Abounds} is compared to the true value for matrices of bandwidth $4$, i.e. $\hat{A}_{(4)}\in \R^{n\times n}$. We show how the upper-bound changes as a function of the dimension of the matrix where $n\in \{20,40,60,100, 200,500,1000\}$. It can be clearly seen, that even for large matrices, the estimates $\epsA$ are easily small enough to be useful. Upper-bounds on the two other norms of  $\dA$ and $\dB$ give very similar results and are omitted due to space constraints.
\begin{figure}[!htb]
\centering
\includegraphics[scale=.45]{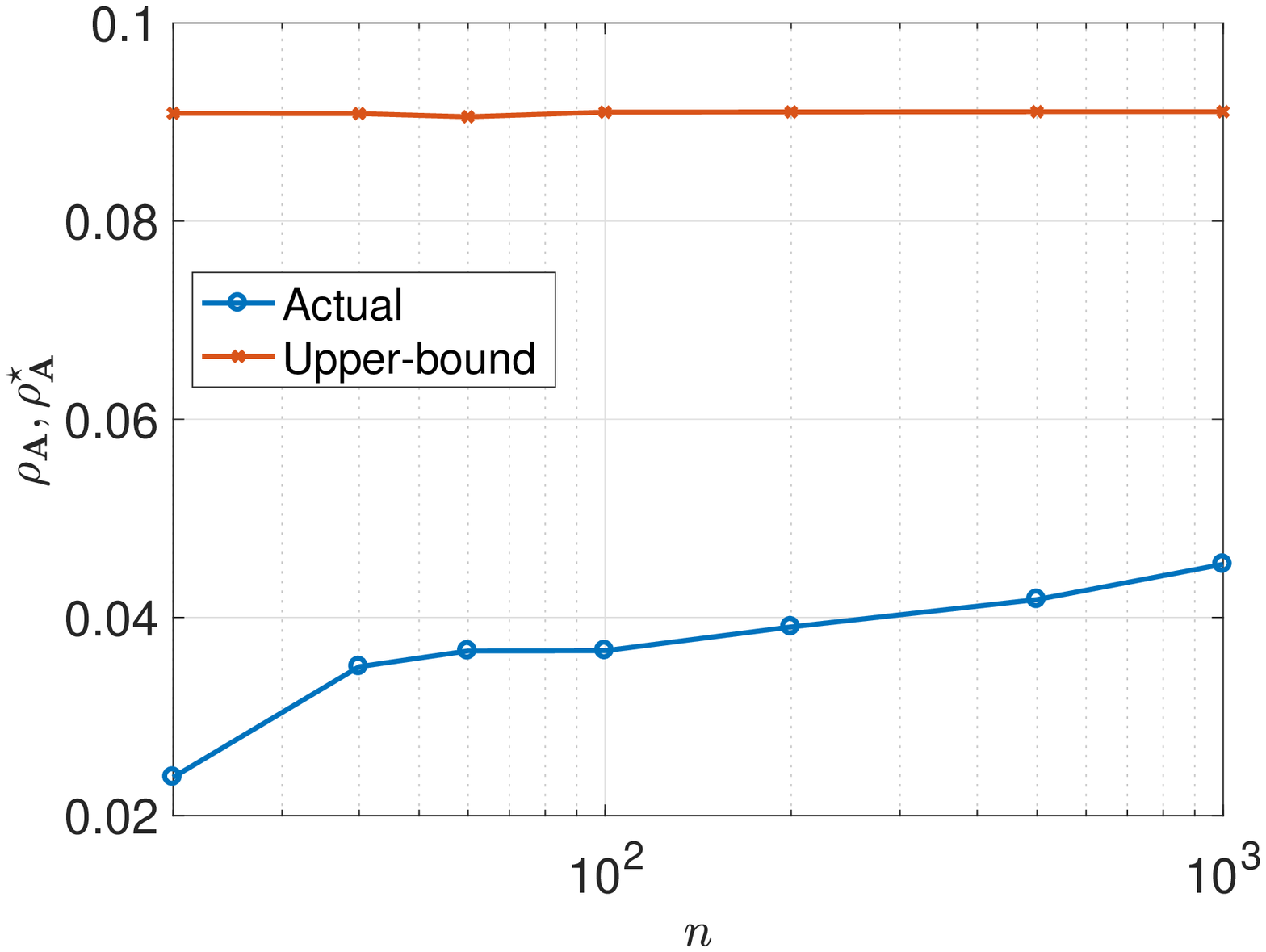}
\caption{The actual error magnitude $\|\dA\|_2 = \epsA^{\star}$ and the upper-bound for $\|\dA\|_2$ given by $\epsA$ from Theorem~\ref{thm:Abounds}.}
\label{fig:boundtest}
\end{figure}

\section{Examples}
We demonstrate the proposed method on a power grid control example. The model comes from \cite{zimmerman2011matpower} and has 57 buses with 7 generator buses. The network topology is   is shown in Figure \ref{fig:power_grid_network}.
\begin{figure}
  \centering
   \includegraphics[width=0.5\columnwidth]{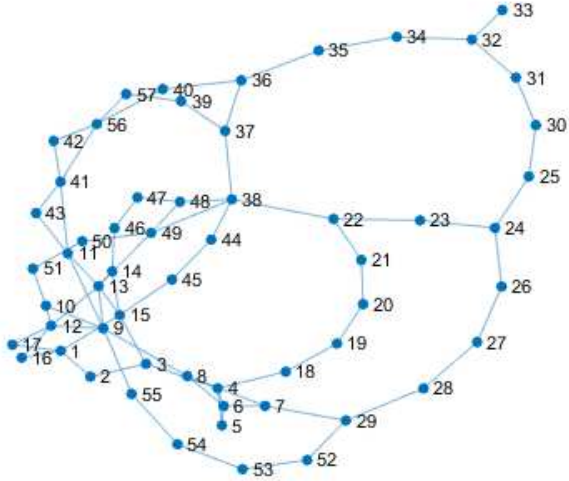}
  \caption{Topology of the power network with dynamics given by~\eqref{eq:grid_model}.}
   \label{fig:power_grid_network}
 \end{figure}
The power grid dynamics are described as follows:
\begin{equation}\label{eq:grid_model}
\begin{aligned}
  \dot{\theta}_i&=\omega_i,\\
  M_i\dot{\omega}_i&=-D_i \omega_i - d_i-u_i -\sum\limits_{j\in\mathcal{N}_i}H_{ij}(\theta_i-\theta_j), i\in\mathcal{G}\\
  0&=-D_i \omega_i - d_i-u_i -\sum\limits_{j\in\mathcal{N}_i}{H_{ij}(\theta_i-\theta_j)}, i\in\mathcal{L},
\end{aligned}
\end{equation}
where $\theta_i$ and $\omega_i$ are the phase angle and frequency of the voltage at bus $i$, $d_i$ is the uncontrollable load at bus $i$ which is treated as a disturbance. $u_i$ is the controllable load, which is used to regulate bus $i$. $\mathcal{G}$ and $\mathcal{L}$ represent the set of generator buses and the set of pure load buses. In this example $\cG = \{1   , 2,  3,  6,  8,  9,  12\}$ and $\mathcal{L} = \{1,\hdots, 57\} \setminus \mathcal G$. For a generator bus, $M_i$ is the inertia and $D_i$ is the damping coefficient; for a load bus, there is zero inertia and $\omega_i$ is determined by an algebraic equation. A generator bus is modeled with 2 states ($x_i=[\theta_i,\omega_i]^T$); and a load bus is modeled with 1 state ($x_i=\theta_i$). $H_{ij}$ represents the sensitivity of the power flow to phase variations, it is nonzero when bus $i$ and bus $j$ are neighbors\footnote{We use the parameter $H_{ij}$ in this paper instead of the more common $B_{ij}$ to avoid confusion with the system matrix $B$.}. In our example we assume an impulsive disturbance hits  bus 3 and affects its frequency. 

\begin{figure}[!t]
  \centering
  \includegraphics[width=0.65\columnwidth]{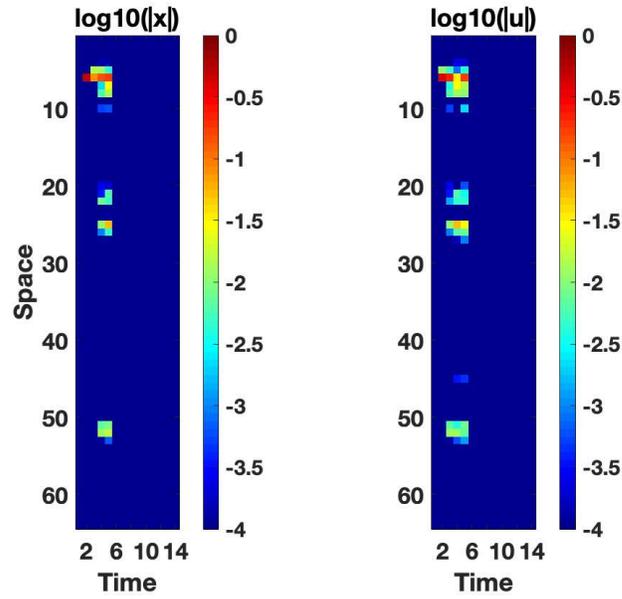}
  \caption{The localized system response implemented on the sparse model. Clearly the response to the disturbance is localized in time and space.}
  \label{fig:sparse_model}
\end{figure}

Using the sparse discretized model, the distributed controller derived from the SLS problem~\eqref{eq:SLS} is feasible for an FIR horizon $T\ge 5$ and the locality radius of $d\ge4$.\footnote{The parameter $T=5$ imposes that after 5 time-steps the disturbance has no affect on the state and that control action is no longer necessary. The locality radius of $4$ ensures that once a disturbance has hit a node in the network, only subsystems within $4$ hops of the disturbance feel the effect. Likewise, only controllers in this region need act. Technical details can be found in~\cite{WanMD18} and case studies in~\cite{SLStutorial}.} In this example we have that $\|\dA\|  = \{0.455, 0.394, 0.873\} $ and $\|\dB\|= \{0.001, 0.006, 0.001\}$ for the $1,2,$ and $\infty$ norms respectively (to 3 s.f.).

When using the exact discretized model obtained using~\eqref{eq:sample}, the SLS synthesis is only feasible if all nodes in the network respond to any disturbance hitting the network, which requires $d=12$ in the example case. This means that distributed and localized control is not possible (i.e. the SLS problem has no localization and is thus a centralized controller) if the underlying model~\eqref{eq:grid_model} is converted to discrete time using standard methods. In Figure~\ref{fig:sparse_model} the closed loop response of the controller on the sparse model is plotted. In Figure~\ref{fig:dense_model}, we show the system response when the controller is designed on the sparse model and implemented on the dense discrete model. Despite the model mismatch the robust controller still manages to localize the disturbance in time and space.

\begin{figure}[!tb]
  \centering
  \includegraphics[width=0.65\columnwidth]{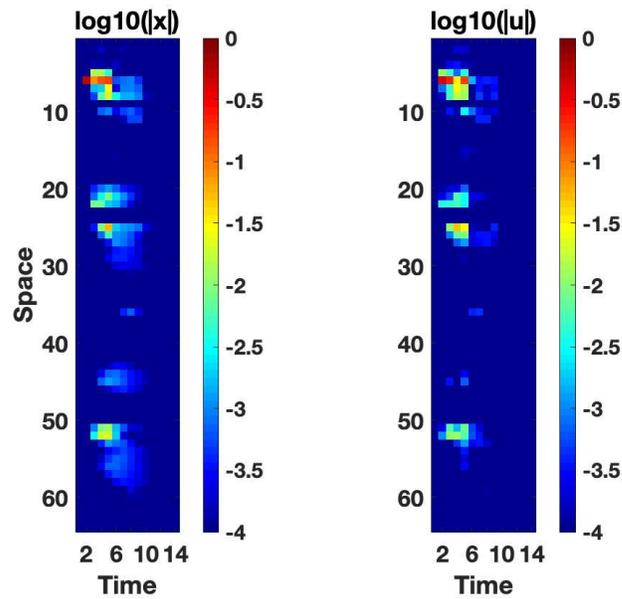}
  \caption{The controller is designed on the sparse model and simulated on the dense model from~\eqref{eq:sample}.}
  \label{fig:dense_model}
\end{figure}

\section{Conclusion}
We have presented a simple projection based method for sparsity-preserving discretization of a continuous-time dynamical system. For the special case of banded matrices, bounds on the approximation error were derived and shown to perform well in practice. For non-banded systems, an existing bound exists which is useful in certain sampling parameter regimes. These bounds were then incorporated into the $\ttf \Delta$ uncertainty parameter in the SLS framework for distributed control. The results were then illustrated on a 57-bus power network where it was shown that an SLS distributed controller designed on the sparse approximation performs well when implemented on the dense ``ground truth'' model. 

Future work will involve looking at how we can derive bounds that are applicable for non-banded systems that can be applied when Theorem~\ref{thm:trunc} is not applicable. When fast but inexact approximations of the matrix exponential are used, the ideas in this paper are also applicable and will be examined. It would also be interesting to see how these results can be carried over to classical robust control problems.

\section*{Acknowledgements}
J. Anderson and Y. Chen are supported by PNNL on grant 424858. J. Anderson is additionally supported by NSF grants  CCF 1637598, ECCS 1619352, and by ARPA-E through the GRID DATA program. N. Matni is generously supported in part by ONR awards N00014-17-1-2191 and N00014-18-1-2833 and the DARPA Assured Autonomy (FA8750-18-C-0101) and Lagrange (W911NF-16-1-0552) programs.
\bibliographystyle{IEEEtran}
\bibliography{biblio}

\end{document}